


\documentclass{amsart}

\usepackage{epsfig}
\usepackage{amscd}
\usepackage{amsmath, amssymb, amsthm}
\usepackage{graphics}
\usepackage{color}
\newtheorem*{main-theorem}{Main Theorem}
\newtheorem{proposition}{Proposition}[section]
\newtheorem{theorem}{Theorem}
\newtheorem{lemma}[proposition]{Lemma}

\theoremstyle{definition}

\newtheorem*{remark}{Remark}

\numberwithin{equation}{section}

\def\OP{{\mathrm{OP}}}
\def\ZZ{{\mathbb Z}}
\def\reals{{\mathbb R}}
\def\cx{{\mathbb C}}

\def\Ci{{\mathcal C}^\infty}

\def\WF{\mathrm{WF}_h\,}

\def\supp{\mathrm{supp}\,}

\def\id{\,\mathrm{id}\,}

\def\O{{\mathcal O}}

\def\s{{\mathcal S}}

\def\neigh{\mathrm{neigh}\,}
\def\Op{\mathrm{Op}\,}

\def\esssupp{\text{ess-supp}\,}

\def\phi{\varphi}

\def\be{\begin{eqnarray*}}
\def\ee{\end{eqnarray*}}
\def\ben{\begin{eqnarray}}
\def\een{\end{eqnarray}}

\def\L2R{L_{\text{Rest}}^2}

\def\tU{\widetilde{U}}
\def\tF{\widetilde{F}}
\def\tchi{\tilde{\chi}}

\begin{document}
\title[Isomorphisms]{Isomorphisms between Algebras of Semiclassical
  Pseudodifferential Operators}
\author{Hans Christianson}
\address{Department of Mathematics, Massachusetts Institute of
Technology, 77 Mass. Ave., Cambridge, MA 02139, USA}
\email{hans@math.mit.edu}

\begin{abstract}
Following the work of Duistermaat-Singer \cite{DS} on isomorphisms of
algebras of global pseudodifferential operators, we classify
isomorphisms of algebras of microlocally defined semiclassical
pseudodifferential operators.  Specifically, we show that any such
isomorphism is given by conjugation by $A = BF$, where $B$ is a
microlocally elliptic semiclassical pseudodifferential operator, and
$F$ is a microlocal $h$-FIO associated to the graph of a local symplectic transformation.
\end{abstract}
\maketitle


\section{Introduction}
In the study of pseudodifferential operators on manifolds, there are
two important regimes to keep in mind.  The first is a global study of
pseudodifferential operators defined using the local Fourier transform
on the cotangent bundle.  If $X$ is a compact smooth manifold and $T^*X$ is
the cotangent bundle with the local coordinates $\rho = (x, \xi)$, we study pseudodifferential operators with
principal symbol homomgeneous at infinity in the $\xi$ variables.  Let
$Y$ be another compact smooth manifold of the same dimension as $X$,
and suppose there is an algebra isomorphism from the algebra of all
pseudodifferential operators on $X$ (filtered by order) to the same algebra on $Y$, and
suppose that isomorphism preserves the order of the operator.  Then
Duistermaat-Singer \cite{DS} have shown that this isomorphism is
necessarily given by conjugation by an elliptic Fourier Integral
Operator (FIO).

The other setting is the semiclassical or ``small-$h$'' regime.  One can study globally
defined semiclassical pseudodifferential operators, but many times it
is meaningful to study operators which are microlocally defined in
some small set (see
\S \ref{preliminaries} for definitions).  Then we think of the $h$
parameter as being comparable to $|\xi|^{-1}$ in the global,
non-semiclassical regime.  Thus the study of small $h$ asymptotics in the
microlocally defined regime should correspond to the study of high
frequency asymptotics in the global regime.  We therefore expect a
similar result to that presented in \cite{DS}, although the techniques
used in the proof will vary slightly.

Let $X$ be a smooth manifold, $\dim X = n \geq 2$, and assume $U
\subset T^*X$ is an open set.  Let $Y$ be
another smooth manifold, $\dim Y = n$, and let $V \subset T^*Y$.  Let $\Psi^{0} / \Psi^{-\infty}(U)$ denote the algebra of 
semiclassical pseudodifferential operators defined microlocally in
$U$ filtered by the order in $h$, and similarly for $V$ (see \S \ref{preliminaries} for definitions).
\begin{theorem}
\label{main-theorem}
Suppose 
\be
g: \Psi^0 / \Psi^{-\infty}(U) \to \Psi^0 / \Psi^{-\infty}(V)
\ee
is an order preserving algebra isomorphism.  For every $\widetilde{U}
\Subset U$ open and precompact, there is a
symplectomorphism
\be
\kappa: \overline{\widetilde{U}} \to \overline{\kappa (
\widetilde{U} )}
\ee
and $h_0>0$ such that, if $F$ is the $h$-FIO associated to
$\kappa$, for all $0 < h < h_0$ and all $P \in \Psi^0 /
\Psi^{-\infty}(U)$ we have
\ben
\label{main-theorem-st}
g(P) = BF P F^{-1} B^{-1} \text{ microlocally in } \kappa(\widetilde{U}) \times \kappa(\widetilde{U}),
\een
where $B \in \Psi^0(V)$ is elliptic on $\kappa( \tU )$. 
\end{theorem}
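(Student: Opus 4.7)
The strategy parallels Duistermaat--Singer \cite{DS}, exploiting the analogy between classical homogeneity in $\xi$ and the semiclassical small-$h$ regime. The plan is: first, recover a symplectomorphism $\kappa$ from the action of $g$ on the symbol algebra; second, use an $h$-FIO $F$ quantizing $\kappa$ to reduce to an isomorphism that acts as the identity on principal symbols; third, express the reduced isomorphism as conjugation by an elliptic semiclassical pseudodifferential operator $B$ constructed order-by-order in $h$.

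For the first step, I would observe that the ideal $h\Psi^0 / \Psi^{-\infty}(\tU)$ is characterized algebraically, as the kernel of the principal symbol map (equivalently, the Jacobson radical of the localized algebra), and hence preserved by $g$. The induced isomorphism on commutative quotients can be identified with a map $C^\infty(\kappa(\tU)) \to C^\infty(\tU)$, which a Milnor-type recovery argument (characters on $C^\infty$ are point evaluations) realizes as pullback by a smooth diffeomorphism $\kappa: \tU \to \kappa(\tU)$. That $\kappa$ is symplectic follows from the semiclassical commutator formula: since $ih^{-1}[P,Q]$ has principal symbol $\{\sigma(P),\sigma(Q)\}$, applying $g$ to a commutator and comparing principal symbols yields $\kappa^*\{a,b\} = \{\kappa^*a,\kappa^*b\}$, so $\kappa^*\omega = \omega$ on $\tU$.

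Next, quantize $\kappa$ by a microlocally elliptic $h$-FIO $F$ with microlocal inverse, and set $g_1(P) := F^{-1} g(P) F$. By the semiclassical Egorov theorem, $g_1$ acts as the identity on principal symbols over $\tU$, and the theorem reduces to showing $g_1$ is inner: $g_1(P) = B P B^{-1}$ microlocally on $\tU$ for some elliptic $B \in \Psi^0$, after which we set the theorem's operator to be $F B F^{-1} \in \Psi^0(V)$.

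To construct $B$ I would proceed iteratively in powers of $h$. Writing $g_1(P) - P = h R(P) + O(h^2)$, the leading map $p \mapsto \sigma(R(P))$ is a derivation of $C^\infty(\tU)$ by differentiating $g_1(PQ) = g_1(P)g_1(Q)$; on the precompact open $\tU$ (shrunk to a contractible subset if necessary) such a derivation is Hamiltonian, $\sigma(R(P)) = \{b_1,p\}$ for some $b_1$. Setting $B_1 = \Op(b_1)$ and replacing $g_1$ by $(I-ihB_1)g_1(I+ihB_1)$ kills the leading discrepancy, producing an isomorphism equal to the identity modulo $h^2$. Iterating yields a formal expansion $B \sim I + hB_1 + h^2 B_2 + \cdots$; Borel summation produces an honest elliptic $B \in \Psi^0$ with $g_1(P)B - BP \in \Psi^{-\infty}$ microlocally on $\tU$. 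The main obstacle is this final step: at each order one must verify that the obstruction really is an exact Hamiltonian derivation (the cohomological input, where simple connectedness of $\tU$ enters via a Poincar\'e lemma), control the supports throughout the iteration so that compositions remain microlocally defined on $\tU$, and confirm that the asymptotic sum yields an operator that is both elliptic on $\tU$ and genuinely conjugates $g_1$ to the identity uniformly for $0 < h < h_0$.
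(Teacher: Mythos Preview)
Your overall strategy matches the paper's: recover $\kappa$ from the action of $g$ on the symbol algebra via maximal ideals, check that $\kappa$ is symplectic using that $g$ preserves commutators, quantize $\kappa$ by an $h$-FIO $F$, and then show the residual automorphism $g_1 = F^{-1} g(\cdot) F$ is inner by an order-by-order construction of $B$.

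There is, however, a genuine gap in your iterative step. You correctly observe that the leading discrepancy $p \mapsto \sigma(R(P))$ is a derivation of the commutative algebra $C^\infty(\widetilde U)$, hence a smooth vector field $X$. But you then assert that on a contractible $\widetilde U$ ``such a derivation is Hamiltonian, $\sigma(R(P)) = \{b_1,p\}$''. That is false in general: an arbitrary vector field need not even be symplectic, let alone Hamiltonian; for instance $x\,\partial_x$ on $(\reals^2, dx\wedge d\xi)$ satisfies $L_X\omega \neq 0$. The Poincar\'e lemma (your ``cohomological input'') only upgrades \emph{symplectic} vector fields to Hamiltonian ones. What is missing is a second identity the paper extracts from the fact that $g_1$ preserves commutators: the induced map $\beta$ is also a derivation of the Poisson bracket,
\[
\beta(\{p,q\}) = \{\beta(p),q\} + \{p,\beta(q)\}.
\]
Evaluating this on local canonical coordinate functions (the paper's ``approximate coordinates'' $\chi_\rho x_j$, $\chi_\rho \xi_k$) forces the coefficients $\gamma_j, \delta_k$ of $X$ to satisfy exactly the closedness relations making $\iota_X\omega$ closed, i.e.\ $X$ symplectic. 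Only then does contractibility yield a local $f$ with $h^{-l}\beta = H_f$, and the paper takes $b = \exp(-i\chi f)$ for the conjugating symbol. Without invoking the Poisson-derivation property your induction cannot get started, so you should insert this step explicitly before appealing to the Poincar\'e lemma.
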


To put Theorem \ref{main-theorem} in context, we observe that
every algebra homomorphism of the form \eqref{main-theorem-st} is an
order preserving algebra isomorphism, according to Proposition
\ref{global-egorov} in \S \ref{preliminaries}.

Automorphisms of algebras of pseudodifferential operators have also
been studied in the context of the more abstract Berezin-Toeplitz
quantization in \cite{Zel}.

{\bf Acknowledgements.}  The author would like to thank Maciej Zworski
for suggesting this problem and many helpful conversations.  This work
  was started while the author was a graduate student in the Mathematics
  Department at UC-Berkeley and he is very grateful for the support
  received 
  while there.


\section{Preliminaries}
\label{preliminaries}
 Let $\Ci(T^*X)$
denote the algebra of smooth, $\cx$-valued functions on $T^*X$, and
define the global symbol classes
\be
\s^m(T^*X)  = \left\{ a \in \Ci \left( (0,h_0]_h; \Ci(T^*X) \right) :
  |\partial^\alpha a| \leq C_\alpha h^{-m} \right\}.
\ee

We define the
essential support of a symbol by complement:
\be
\lefteqn{\esssupp_h (a) =} \\
&& =  \complement \left\{ (x, \xi) \in T^*X : \left| \partial^\alpha a \right|
\leq C_\alpha h^N \,\, \forall N \text{ and } \forall (x', \xi') \text{
  near } (x, \xi) \right\}.
\ee


By multiplying elements of $S^m(T^*X)$ by an appropriate cutoff in $\Ci_c(U)$, we may think of symbols as being
microlocally defined in $U$, and define the class of symbols with
essential support in $U$
\be
\s^m(U) = \left\{ a \in \Ci \left( (0,1]_h; \Ci_c(U)  \right) :
  |\partial^\alpha a| \leq C_\alpha h^{-m} \right\}.
\ee
We write $\s^m = \s^m(U)$ when there is no ambiguity.  We can think of elements of $\s^m$ as formal power series in $h$:
\be
a(x, \xi; h) = \sum_{j=-m}^\infty h^j a_j(x, \xi;h),
\ee
where each $a_j$ is in $\Ci_c(U)$ and has derivatives of all orders
bounded in $h$.

We have the corresponding spaces of pseudodifferential operators
$\Psi^m(U)$ acting by the local formula (Weyl calculus)
\be
\Op_h^w(a)u(x) = \frac{1}{(2 \pi h)^n} \int \int a \left( \frac{x + y}{2}, \xi; h \right) 
e^{i \langle x-y, \xi \rangle / h }u(y) dy d\xi.
\ee
For $A = \Op_h^w(a)$ and $B = \Op_h^w(b)$, $a \in \s^{m}$, $b \in \s^{m'}$ we have the composition 
formula (see, for example, the review in \cite{DiSj})
\begin{eqnarray}
\label{Weyl-comp}
A \circ B = \Op_h^w \left( a \# b \right),
\end{eqnarray}
where
\begin{eqnarray}
\label{a-pound-b}
\s^{ m+m'} \ni a \# b (x, \xi) := \left. e^{\frac{ih}{2} \omega(Dx, D_\xi; D_y, D_\eta)} 
\left( a(x, \xi) b(y, \eta) \right) \right|_{{x = y} \atop {\xi = \eta}} ,
\end{eqnarray}
with $\omega$ the standard symplectic form.  Observe $\#$ preserves
essential support in the sense that if $\esssupp_h(a) \cap
\esssupp_h(b) = \emptyset$, then $a \# b = \O(h^\infty)$.  We define
the wavefront set of a pseudodifferential operator $A = \Op_h^w(a)$ as
\be
\WF (A) = \esssupp_h(a),
\ee
so that $\Psi^m(U)$ is the class of pseudodifferential operators with
wavefront set contained in $U$.  We denote
\be
\Psi^0(U) & :=&  \bigcup_{m \leq 0} \Psi^m(U) \text{ and} \\
\Psi^{-\infty}(U) & :=& \bigcap_{m \in \ZZ} \Psi^m(U).
\ee

We will need the definition of microlocal equivalence of operators.  Suppose 
$T: \Ci(X) \to \Ci(X)$ and that for any seminorm $\| \cdot \|_1$ on 
$\Ci(X)$ there is a second seminorm $\| \cdot \|_2$ on $\Ci(X)$ such that 
\begin{eqnarray*}
\| Tu\|_1 = \O(h^{-M_0})\|u \|_2
\end{eqnarray*}
for some $M_0$ fixed.  Then we say $T$ is {\it semiclassically tempered}.  We assume for the rest of 
this paper that all operators satisfy this condition (see
\cite[Chap. 10]{EvZw} for more on this).  Let $U,V \subset T^*X$ be open pre-compact sets.  
We think of operators defined microlocally near $V \times U$ as equivalence classes of tempered operators.  
The equivalence relation is
\begin{eqnarray*}
T \sim T' \Longleftrightarrow A(T-T')B = \O(h^\infty): \mathcal{D}'\left( X \right) \to \Ci 
\left(X\right)
\end{eqnarray*}
for any $A,B \in \Psi_h^{0,0}(X)$ such that 
\begin{eqnarray*}
&& \WF (A) \subset \widetilde{V}, \quad \WF (B) \subset \widetilde{U}, \,\, \text{with} \,\, \widetilde{V}, 
\widetilde{U} \,\, \text{open and } \\
&& \quad \quad \overline{V} \Subset \widetilde{V} \Subset T^*X, \quad \overline{U} \Subset 
\widetilde{U} \Subset T^*X.
\end{eqnarray*}
In the course of this paper, when we say $P=Q$ {\it microlocally} near $U \times V$, we mean for any $A$, $B$ as above,
\begin{eqnarray*}
APB - AQB = \O_{L^2 \to L^2}\left( h^\infty \right),
\end{eqnarray*}
or in any other norm by the assumed pre-compactness of $U$ and $V$.  Similarly, we say $B = T^{-1}$ on $V \times V$ if 
$BT = I$ microlocally near $U \times U$ and $TB = I$ microlocally near
$V \times U$.  Thus 
\be
\Psi^0 / \Psi^{-\infty} (U) 
\ee
is the algebra of bounded semiclassical pseudodifferential operators defined
microlocally in $U$ modulo this equivalence relation.  It is
interesting to observe that this equivalence relation has a different
meaning in the high-frequency regime.  There, $\Psi^{-\infty}(X)$
corresponds to smoothing operators, although they may not be ``small''
in the sense of $h \to 0$.

We have the principal symbol map
\begin{eqnarray*}
\sigma_h : \Psi^m ( U) \to \s^{m} / \s^{m-1} 
(U),
\end{eqnarray*}
which gives the left inverse of $\Op_h^w$ in the sense that 
\begin{eqnarray*}
\sigma_h \circ \Op_h^w: \s^{m}(U) \to \s^{m}/\s^{ m-1} (U)
\end{eqnarray*}
is the natural projection.

We will use the following
well-known semiclassical version of Egorov's theorem (see \cite{Ch, Ch2} or
\cite{EvZw} for a proof).

\begin{proposition}
\label{AF=FB}
Suppose $U$ is an open neighbourhood of $(0,0)$ and $\kappa: \overline{U} \to \overline{U}$ is a symplectomorphism fixing $(0,0)$.  
Then there is a unitary operator $F : L^2 \to L^2$ such that for all $A = \Op_h^w(a)$,
\begin{eqnarray*}
AF = FB \,\, \text{microlocally on}\,\, U \times U,
\end{eqnarray*}
where $B = \Op_h^w(b)$ for a Weyl symbol $b$ satisfying
\begin{eqnarray*}
b = \kappa^* a + \O(h^2).
\end{eqnarray*}
$F$ is microlocally invertible in $U \times U$ and $F^{-1} A F = B$ microlocally in $U \times U$.
\end{proposition}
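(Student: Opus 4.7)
The plan is to realize $F$ as the propagator of a time-dependent Schr\"odinger equation whose Hamiltonian generates the flow $\kappa$, and then obtain the Egorov relation by reading off the Heisenberg equation at the level of principal symbols.

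First, I would connect $\kappa$ to the identity by a smooth family $\{\kappa_t\}_{t\in[0,1]}$ of local symplectomorphisms fixing $(0,0)$, with $\kappa_0 = \id$ and $\kappa_1 = \kappa$. Such a family exists near the identity by interpolating through generating functions, and a general $\kappa$ is factored as a composition of pieces close to the identity. This family is generated by a time-dependent real Hamiltonian $q_t$ satisfying $\dot\kappa_t = H_{q_t}\circ\kappa_t$; after multiplying by a cutoff supported in $U$ that equals $1$ near $(0,0)$, we may assume $q_t \in \Ci_c(U;\reals)$. Quantize to $Q(t) = \Op_h^w(q_t)$, which is self-adjoint, and define $F(t)$ as the unique solution of
\begin{equation*}
hD_t F(t) = Q(t) F(t), \qquad F(0) = I.
\end{equation*}
Self-adjointness of $Q(t)$ makes $F(t)$ unitary on $L^2$; set $F := F(1)$.

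To obtain the Egorov relation, consider $A(t) := F(t) A F(t)^{-1}$, which satisfies the Heisenberg equation
\begin{equation*}
\partial_t A(t) = \frac{i}{h}\bigl[Q(t), A(t)\bigr], \qquad A(0) = A.
\end{equation*}
In Weyl calculus the Moyal expansion of a commutator contains only odd powers of $h$, so $\sigma\bigl(\frac{i}{h}[Q, A]\bigr) = \{q, a\} + O(h^2)$. Thus the principal symbol $a(t) := \sigma(A(t))$ satisfies $\partial_t a(t) = \{q_t, a(t)\} + O(h^2)$. A direct computation using $\dot\kappa_t = H_{q_t}\circ\kappa_t$ together with the invariance $\{f, g\}\circ\kappa_t = \{f\circ\kappa_t, g\circ\kappa_t\}$ of Poisson brackets under symplectomorphisms shows that $a\circ\kappa_t^{-1}$ solves $\partial_t b = \{q_t, b\}$ with $b(0) = a$. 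Hence $\sigma(FAF^{-1}) = a\circ\kappa^{-1} + O(h^2)$, so equivalently $\sigma(F^{-1}AF) = \kappa^* a + O(h^2)$. Setting $B := F^{-1}AF$ then gives the Weyl symbol $b = \kappa^* a + O(h^2)$.

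The main obstacle is keeping the construction compatible with the microlocal calculus throughout: the cutoff of $q_t$ must preserve $\dot\kappa_t = H_{q_t}\circ\kappa_t$ on a neighbourhood of $(0,0)$ while $q_t$ remains compactly supported in $U$, and one must verify inductively that $A(t)$ is a genuine Weyl pseudodifferential operator whose full symbol expansion has $O(h^2)$ corrections controlled uniformly in $t \in [0,1]$. Microlocal invertibility of $F$ on $U\times U$ then follows from the unitarity of $F$ combined with $\kappa$ being a local diffeomorphism, and the relation $F^{-1}AF = B$ microlocally on $U\times U$ holds by construction.
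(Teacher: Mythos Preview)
Your proposal is correct and follows the standard propagator/Heisenberg-evolution approach; the paper does not give its own proof of this proposition but cites \cite{Ch, Ch2, EvZw}, and the very construction you describe---a family $\kappa_t$ generated by a time-dependent Hamiltonian $q_t$ with $F$ the time-$1$ propagator of a Schr\"odinger equation $hD_t F = QF$---is exactly the machinery the paper invokes explicitly in the proof of Lemma~\ref{cross-term-lemma}. Your identification of the $\O(h^2)$ remainder as arising from the odd-powers-only Moyal expansion of commutators in the Weyl calculus is the essential point.
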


Observe that Proposition \ref{AF=FB} implicitly identifies a
neighbourhood $U$ with its coordinate representation.  To make a
global statement, we use the following Lemma.

\begin{lemma}
\label{cross-term-lemma}
Let $U_1, U_2 \subset \reals^{2n}$ be open sets with $H^1(U_j, \cx) =
\{0 \}$, $j = 1,2$.  Assume $V:= U_1 \cap U_2 \neq \emptyset$ and let
$\tU$ be a neighbourhood of $U_1 \cup U_2$.  Suppose 
\be
\kappa : \tU \to \kappa(\tU) \subset \reals^{2n}
\ee
is a symplectomorphism and let $F_j$ be the quantization of
$\kappa|_{U_j}$, $j = 1,2$, as in Proposition \ref{AF=FB}.  Then 
\be
&& F_1 F_2^{*} = \id + \O(h^2) \text{ microlocally near } \kappa(V)
\times \kappa(V) \text{ and } \\
&& F_1^{*} F_2 = \id + \O(h^2) \text{ microlocally near } V \times V
\ee
as pseudodifferential operators.  
\end{lemma}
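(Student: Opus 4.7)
The plan is to set $G := F_1^* F_2$ and show $G = \id + \O(h^2)$ microlocally on $V \times V$; by symmetry (using $\kappa \circ \kappa^{-1} = \id$ in place of $\kappa^{-1}\circ\kappa = \id$) the same argument applied to $F_1 F_2^*$ will yield the first claim on $\kappa(V) \times \kappa(V)$. Since $F_2$ is an $h$-FIO associated to $\kappa|_{U_2}$ and $F_1^*$ to $\kappa^{-1}|_{\kappa(U_1)}$, the composition $G$ is an $h$-FIO associated to $\kappa^{-1} \circ \kappa = \id$ on $V$, and by clean composition a pseudodifferential operator microlocally on $V \times V$. Write $G = \Op_h^w(c)$ with $c \in \s^0(V)$.

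Next, I would conjugate arbitrary pseudodifferential operators by $G$ using Proposition \ref{AF=FB} twice. For any $A = \Op_h^w(a)$ with $\WF(A) \Subset V$, moving $A$ through $F_2$ produces a symbol $(\kappa^{-1})^* a + \O(h^2)$, and moving the result back through $F_1$ pulls it back by $\kappa$, so
\be
G A G^{-1} = F_1^* \bigl(F_2 A F_2^*\bigr) F_1 = \Op_h^w\bigl(\kappa^*(\kappa^{-1})^* a + \O(h^2)\bigr) = A + \O(h^2),
\ee
and hence $[G,A] = \O(h^2)$. The Weyl composition formula \eqref{a-pound-b} expands $[G,A]$ as $\frac{h}{i}\Op_h^w(\{c,a\}) + \O(h^3)$, so $\{c_0,a\} \equiv 0$ for every such $a$, forcing the principal symbol $c_0$ to be locally constant on $V$. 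Combined with the microlocal unitarity $G G^* = F_1^* (F_2 F_2^*) F_1 = F_1^* F_1 = \id$ on $V$, this yields $|c_0| = 1$, so that $c_0 = e^{i\theta}$ for some locally constant phase.

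The hypothesis $H^1(U_j,\cx) = 0$ is what lets us normalize this phase. The quantizations $F_j$ of Proposition \ref{AF=FB} are determined up to microlocal equivalence by a primitive on $U_j$ of the closed $1$-form $\kappa^*\alpha - \alpha$ (closed because $\kappa$ is symplectic, with $\alpha$ the Liouville form); the vanishing first cohomology guarantees such a primitive exists globally on each $U_j$, so that specifying a single normalization at one point of each component of $V$ forces the two choices to agree up to the advertised error, giving $e^{i\theta} = 1$. A parallel argument at the next order in $h$, relying on the sharpness to $\O(h^2)$ (and not merely $\O(h)$) in Proposition \ref{AF=FB}, rules out an order-$h$ correction to $c$ and yields $c = 1 + \O(h^2)$. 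I expect the main obstacle to be precisely this normalization step: the commutator and unitarity computations alone only pin $c_0$ down up to a unit-modulus constant, and it is the cohomological hypothesis that removes the phase ambiguity and promotes the error from $\O(h)$ to $\O(h^2)$.
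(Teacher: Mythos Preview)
Your approach via symbol calculus is genuinely different from the paper's, and the first two steps---showing that $G=F_1^*F_2$ is a pseudodifferential operator on $V$ with locally constant unit-modulus principal symbol $c_0$---are correct. The difficulties are exactly where you flag them, but they are more serious than you indicate.

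For the subprincipal step: in the Weyl calculus the symbol of $[G,A]$ is $\frac{h}{i}\{c,a\}+\O(h^3)$, and once $c_0$ is constant this equals $\frac{h^2}{i}\{c_1,a\}+\O(h^3)$, which is already $\O(h^2)$ for \emph{any} $c_1$. So the relation $GAG^{-1}=A+\O(h^2)$ that you obtain from two applications of Proposition~\ref{AF=FB} places no constraint whatsoever on $c_1$; you would need $GAG^{-1}=A+\O(h^3)$, which in turn requires knowing that the $\O(h^2)$ corrections in Egorov for $F_1$ and $F_2$ coincide on $V$. Unitarity at order $h$ only gives $\Re(\bar c_0 c_1)=0$, not $c_1=0$. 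Likewise, the phase normalization $e^{i\theta}=1$ cannot be extracted from Egorov and unitarity alone, since $e^{i\theta}F_2$ satisfies exactly the same conclusions as $F_2$ in Proposition~\ref{AF=FB}; it is a feature of the \emph{construction} of the $F_j$, not of the properties you have used.

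The paper bypasses both issues by going back to that construction. Each $F_j$ arises as the time-$1$ value of a family $\tF_j(t)$ solving $hD_t\tF_j+\tF_j Q(t)=0$ with the \emph{same} quantized Hamiltonian $Q(t)=\Op_h^w(q_t)$ (the hypothesis $H^1(U_j,\cx)=0$ is what guarantees a global $q_t$) and with $\tF_j(0)=\id+\O(h^2)$. A one-line computation using the evolution equations for $\tF_j$ and $\tF_j^*$ shows $\partial_t(\tF_1^*\tF_2)=0$, hence $F_1^*F_2=\tF_1^*(0)\tF_2(0)=\id+\O(h^2)$. This deformation argument settles the phase and the subprincipal term simultaneously; to close your argument you would essentially have to reproduce this content.
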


\begin{proof}
From \cite[Corollary 3.4]{Ch2} we can for $0 \leq t \leq 1$ find a family of
symplectomorphisms $\kappa_t: \reals^{2n} \to \reals^{2n}$, a
Hamiltonian $q_t$, and linear operators $\tF_j(t): L^2( \reals^n) \to
L^2( \reals^n)$, $j = 1,2$ satisfying:
\be
\kappa_0 = \id, && \kappa_1 |_{\neigh (V)} = \kappa, \\
\frac{d}{dt} \kappa_t & = & (\kappa_t)_* H_{q_t},
\ee
and if $Q_t = \Op_h^w (q_t)$ is the quantization of $q_t$, the $\tF_j$ satisfy
\be
h D_t \tF_j(t) + \tF_j(t) Q(t) = 0, \,\, (0 \leq t \leq 1) \\
\tF_j(1) = F_j,
\ee
and $\tF_j(0) = \id + \O(h^2)$ as a pseudodifferential operator.  The
adjoints satisfy
\be
h D_t \tF_j^*(t) -  Q(t) \tF_j^*(t) = 0, \,\, (0 \leq t \leq 1) \\
\tF_j^*(1) = F_j^*,
\ee
and $\tF_j^*(0) = \id + \O(h^2)$ as a pseudodifferential operator.  A
calculation shows $\tF_1 \tF^*_2$ and $\tF_1^* \tF_2$ are constant.
The conclusion of the Lemma holds at $t=0$, so it holds at $t=1$
as well.
\end{proof}

The next proposition is a more global version of Proposition
\ref{AF=FB}.  
\begin{proposition}
\label{global-egorov}
Suppose $U \subset T^*X$ is an open set and $\kappa: \overline{U} \to
\overline{\kappa U}$ is a symplectomorphism.  
Then for any $\tU \Subset U$ open and precompact, there is a linear
operator $F : L^2(X) \to L^2(X)$, microlocally invertible in $\tU
\times \kappa( \tU )$ such that for all $A = \Op_h^w(a)$,
\be
F^{*} A F = B \text{ microlocally in } \tU \times \tU,
\ee
for $B = \Op_h^w(b)$ for a Weyl symbol $b$ satisfying
\begin{eqnarray*}
b = \kappa^* a + \O(h^2).
\end{eqnarray*}
\end{proposition}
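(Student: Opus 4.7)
The plan is to cover $\overline{\tU}$ with finitely many open pieces on each of which Proposition \ref{AF=FB} gives a local quantization of $\kappa$, and then to glue these together via a partition of unity, using Lemma \ref{cross-term-lemma} to control the cross terms.

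First, I would choose a finite open cover $V_1, \ldots, V_N$ of $\overline{\tU}$ by precompact subsets of $U$ such that each $V_j$ and each nonempty pairwise union $V_i \cup V_j$ is diffeomorphic to a ball (hence has trivial first cohomology), and such that Proposition \ref{AF=FB}, after an affine symplectic change of coordinates, applies to $\kappa|_{V_j}$. This produces operators $F_j : L^2(X) \to L^2(X)$, microlocally invertible on $V_j \times \kappa(V_j)$, with $F_j^* A F_j = B_j$ microlocally on $V_j \times V_j$ and $\sigma(B_j) = \kappa^* \sigma(A) + \O(h^2)$. Lemma \ref{cross-term-lemma} applied with $U_1 = V_i$, $U_2 = V_j$ then yields
$$F_i^* F_j = \id + \O(h^2) \text{ microlocally near } (V_i \cap V_j) \times (V_i \cap V_j)$$
whenever $V_i \cap V_j \neq \emptyset$, which in particular forces $\sigma(B_i) - \sigma(B_j) = \O(h^2)$ on the overlap.

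Next, I would take a smooth partition of unity $\{\chi_j\}$ subordinate to $\{V_j\}$ with $\sum_j \chi_j \equiv 1$ near $\overline{\tU}$, set $X_j = \Op_h^w(\chi_j)$, and define
$$F := \sum_{j=1}^N F_j X_j.$$
Expanding gives $F^* A F = \sum_{i,j} X_i F_i^* A F_j X_j$. For each $(i,j)$ with $V_i \cap V_j \neq \emptyset$, I would write $F_i^* A F_j = (F_i^* F_j)(F_j^* A F_j) = B_j + \O(h^2)$ microlocally on the overlap; for disjoint $V_i$, $V_j$ the cutoffs make $X_i F_i^* A F_j X_j$ microlocally $\O(h^\infty)$ by essential-support considerations. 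Assembling the $B_j$ into a single $B$ with Weyl symbol $b$ satisfying $b|_{V_j} = \sigma(B_j) + \O(h^2)$ and using $\sum_j X_j = \id$ microlocally on $\tU$, I would conclude
$$F^* A F = \Big(\sum_i X_i\Big) B \Big(\sum_j X_j\Big) + \O(h^2) = B \text{ microlocally on } \tU \times \tU,$$
with $b = \kappa^* a + \O(h^2)$ as required. The same partition-of-unity computation applied to $F^* F$ yields $F^* F = \id$ microlocally on $\tU$ (and $FF^* = \id$ microlocally on $\kappa(\tU)$), giving the stated microlocal invertibility of $F$ on $\tU \times \kappa(\tU)$.

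The main obstacle is the cohomological bookkeeping required to invoke Lemma \ref{cross-term-lemma}: the cover $\{V_j\}$ must be fine enough that each pairwise union $V_i \cup V_j$ is contractible. This is easily arranged by choosing the $V_j$ to be images of small convex sets in a chart. Once this is secured, the $\O(h^2)$ errors do not compound dangerously, since only pairwise cross terms appear in the expansion of $F^* A F$, each requiring a single application of Lemma \ref{cross-term-lemma}.
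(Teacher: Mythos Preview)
Your proposal is correct and follows essentially the same route as the paper: define $F = \sum_j F_j \Op_h^w(\chi_j)$ for local quantizations $F_j$ of $\kappa$ subordinate to a partition of unity, use Lemma~\ref{cross-term-lemma} to reduce the cross terms $F_i^* F_j$ to $\id + \O(h^2)$, and then expand $F^*F$, $FF^*$, and $F^*AF$. Two small remarks: Lemma~\ref{cross-term-lemma} only requires $H^1(V_j,\cx)=\{0\}$ for each piece individually, so you need not arrange the pairwise unions to be contractible; and the partition-of-unity computation gives $F^*F = \id + \O(h^2)$ rather than $\id$ exactly, which still suffices for microlocal invertibility.
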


\begin{proof}
The idea of the proof is to glue together operators from Proposition
\ref{AF=FB} with a partition of unity.  Let 
\be
1 = \sum_j \chi_j
\ee
be a partition of unity of $U$ so that $H^1(\supp \chi_j, \cx) = \{0\}$ 
for each $j$.  Let $U_j = \supp \chi_j \cap U$, $V_j = \kappa (U_j)$,
and let $F_j$ be the quantization of $\kappa|_{U_j}$ as in Proposition
\ref{AF=FB}.  Set $F = \sum_j  F_j \chi_j^w$, where $\chi_j^w =
\Op_h^w(\chi_j)$, so that $F^* = \sum_k \chi_k^w F_k^* $.  
We first verify:
\be
F^* F & = & \sum_{j,k} \chi_k^w F_k^* F_j \chi_j^w \\
& = & \sum_{j,k} \chi_k^w (1 + \O_{j,k}(h^2)) \chi_j^w \\
& = & 1 + \O(h^2)
\ee
microlocally on $\tU$ since $\tU$ is covered by finitely many
of the $U_j$s.  Further,
\be
F F^* & = & \sum_{j,k}  F_j \chi_j^w \chi_k^w F_k^* \\
& = & \sum_{j,k} \Op((\kappa^{-1})^* \chi_j + \O_j(h^2)) F_j F_k^*
\Op((\kappa^{-1})^* \chi_k + \O_k(h^2)) \\
& = & 1 + \O(h^2)
\ee
as above.  Hence $F^*$ is an approximate left and right inverse
microlocally on $\overline{\tU} \times \overline{\kappa ( \tU )}$ so
$F$ is microlocally invertible.

Now for each
$j$, choose $\tchi_j \in \Ci_c(T^*X)$ satisfying $\tchi_j \equiv 1$ on
$\supp \chi_j$ with support in a slightly larger set so that 
\be
\sum_j \tchi_j \leq C \text{ on } \tU, 
\ee
for $C>0$ fixed.  We
calculate for $A = \Op_h^w(a)$:
\be
F^* A F & = & \sum_{j,k} \chi_k^w F_k^* A F_j \chi_j^w \\
& = & \sum_{j,k} \chi_k^w F_k^* F_j B_j \chi_j^w,
\ee
where $B_j = \Op_h^w(b_j)$ for a symbol 
\be
b_j = (\kappa^* a + \O(h^2)) \tchi_j.
\ee
Then from Lemma \ref{cross-term-lemma} we have
\ben
\label{FAF}
F^* A F  =  \sum_{j,k} \chi_k^w B_j (1 + \O_{j,k}(h^2)) \chi_j^w ,
\een
and since we can cover $\overline{\tU}$ with finitely many of the
$U_j$, the error in \eqref{FAF} is $\O(h^2)$ microlocally on $\tU$ and
the Proposition follows.
\end{proof}

\begin{remark}
This notion of quantization of symplectic transformations is a
constructive version of the more general definition due to
H\"ormander-Melrose \cite{Hor1, Hor, Mel} as an integral
operator with a distribution kernel 
supported on the Lagrangian submanifold associated to the symplectic
relation (see also \cite{duistermaat} and the recent semiclassical treatment
in \cite{Ale}).
\end{remark}

Let $Y$ be another smooth manifold of the same dimension as $X$, and
let $V \subset T^*Y$ be a non-empty, pre-compact, open set.  We say 
\be
g: \Psi^0/\Psi^{-\infty}(U) \to \Psi^0/\Psi^{-\infty}(V)
\ee
is an order preserving algebra isomorphism (of algebras filtered by
powers of $h$) if
\be
g(\Psi^m(U)) = \Psi^m(V), \,\, g^{-1}(\Psi^m(V)) = \Psi^m(U),
\ee
and for every $A, A' \in \Psi^m(U)$, $B \in \Psi^{m'}(U)$,
\be
g(A + A') & = & g(A) + g(A') \bmod \Psi^{-\infty}(V), \\
g(AB) & = & g(A) g(B) \bmod \Psi^{-\infty}(V).
\ee


\section{The Proof of Theorem \ref{main-theorem}}
We break the proof of Theorem \ref{main-theorem} into several
lemmas.  
\begin{lemma}
The maximal ideals of $\s^0/\s^{-1}(U)+ \cx$ are either of the form 
\ben
\label{m-rho}
\mathcal{M}_\rho := \left\{ p \in \s^0/\s^{-1}(U)  : p(\rho)
= 0, \,\, \rho \in U \right\},
\een
or 
\ben
\label{m-bdy}
\mathcal{M}_{\partial U}:= \Ci_c(U) + \O(h^\infty) \Ci (U).
\een
\end{lemma}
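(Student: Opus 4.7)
The plan is to identify $A := \s^0/\s^{-1}(U) + \cx$ with $\Ci_c(U) + \cx$, viewing elements as smooth $\cx$-valued functions on $U$ that are constant outside a compact subset of $U$ (so they extend continuously to the one-point compactification $U^+$). With this picture $\mathcal{M}_\rho$ is the kernel of the character $\phi_\rho(p+c) := p(\rho)+c$, while $\mathcal{M}_{\partial U}$ is the kernel of the augmentation $\phi_\infty(p+c) := c$; both are surjective onto $\cx$, so both ideals are immediately maximal and the content lies in the converse.

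Let $\mathfrak{m}\subset A$ be an arbitrary maximal ideal. I would split into two cases. First, if every element of $\mathfrak{m}$ has vanishing constant part, then $\mathfrak{m}\subseteq\Ci_c(U)=\mathcal{M}_{\partial U}$ and maximality of $\mathfrak{m}$ forces equality. Second, if some $m_0=g_0+c_0\in\mathfrak{m}$ has $c_0\neq 0$, I would show that the common zero set
\[
Z(\mathfrak{m}):=\bigcap_{f\in\mathfrak{m}}\{\rho\in U:f(\rho)=0\}
\]
is nonempty; any $\rho\in Z(\mathfrak{m})$ then satisfies $\mathfrak{m}\subseteq\mathcal{M}_\rho$, and maximality gives $\mathfrak{m}=\mathcal{M}_\rho$.

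The crux is to establish $Z(\mathfrak{m})\neq\emptyset$ in the second case. Assuming for contradiction $Z(\mathfrak{m})=\emptyset$, for each $\rho\in U$ pick $f_\rho\in\mathfrak{m}$ with $f_\rho(\rho)\neq 0$; note $|f_\rho|^2=f_\rho\bar{f}_\rho\in\mathfrak{m}$ (since $A$ is closed under complex conjugation) and is bounded below by some $\delta_\rho>0$ on an open neighborhood $V_\rho$ of $\rho$. Covering the compact set $\supp g_0$ by finitely many $V_{\rho_1},\dots,V_{\rho_N}$, I set
\[
F := |m_0|^2 + \sum_{i=1}^N |f_{\rho_i}|^2 \in \mathfrak{m}.
\]
Outside $\supp g_0$ we have $m_0\equiv c_0$ so $F\geq|c_0|^2>0$, while on $\supp g_0$ the cover gives $F\geq\min_i\delta_{\rho_i}$. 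Thus $F>0$ on $U$, and $F=G+C$ with $G\in\Ci_c(U)$ and $C=|c_0|^2+\sum_i|c_{\rho_i}|^2>0$. Then $1/F-1/C$ is smooth on $U$ (since $F>0$) and compactly supported (since $F\equiv C$ outside $\supp G$), so $1/F\in A$. Hence $F$ is a unit and $1=F\cdot F^{-1}\in\mathfrak{m}$, contradicting properness.

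The main obstacle I expect is precisely engineering this $F\in\mathfrak{m}$ that is simultaneously bounded below on the compact set $\supp g_0$ and equal to a strictly positive constant outside it: the nonzero constant part $c_0$ of $m_0$ is what supplies uniform positivity near $\partial U$, and its absence in the first case is exactly what forces $\mathfrak{m}=\mathcal{M}_{\partial U}$ there.
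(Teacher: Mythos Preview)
Your argument is correct and follows the same covering/compactness strategy as the paper, but the organization is genuinely a bit different. The paper does not split on the constant part; instead it assumes $\mathcal{M}$ is not of the form $\mathcal{M}_\rho$ for any $\rho$, covers the support of an arbitrary $a\in\mathcal{M}_{\partial U}$ by neighbourhoods on which suitable $a_{\rho_j}\in\mathcal{M}$ are bounded below, sets $b=\sum a_{\rho_j}$, and writes $a=(a/b)\,b\in\mathcal{M}$ to conclude $\mathcal{M}_{\partial U}\subset\mathcal{M}$. You instead single out an element $m_0$ with nonzero constant part, cover $\supp g_0$, and build a strictly positive $F\in\mathfrak{m}$ whose inverse lies in $A$, obtaining a unit in $\mathfrak{m}$ directly. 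Your use of $|f|^2=f\bar f$ is a cleaner way to get real nonnegative elements than the paper's ``multiply by a (positive or negative) constant'' step, which is slightly awkward for $\cx$-valued symbols; and your explicit check that $1/F-1/C\in\Ci_c(U)$ makes the invertibility in $A$ transparent. Both routes hinge on the same idea---finite covers of a compact set by sets where ideal elements are bounded away from zero---so the difference is packaging rather than substance.
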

\begin{proof}
Clearly for each $\rho \in U$, $\mathcal{M}_\rho$ is a maximal ideal.
Also, $\mathcal{M}_{\partial U}$ is maximal, since any ideal
$\mathcal{M}$ satisfying 
\be
\mathcal{M}_{\partial U} \subsetneq \mathcal{M}
\ee
must contain a constant, and therefore is equal to $\s^0/\s^{-1}(U)+\cx$.  Suppose
$\mathcal{M}$ is another maximal ideal which is not of the form
\eqref{m-rho} for any $\rho \in U$.  Then for each point $\rho
\in U$, there is $a_\rho \in \mathcal{M}$ such that
$a_\rho(\rho) \neq 0$.  Further, by multiplying by a (positive or
negative) constant if necessary, we may assume for each $\rho$ there is a neighbourhood
$U_\rho$ of $\rho$ such that $\left. a_\rho \right|_{\overline{U_\rho}}
\geq 1$.  Let $a(x, \xi) \in \mathcal{M}_{\partial U}$, and let 
\be
K =\esssupp_h (a) \Subset U.
\ee
As $K$ is compact, we can cover it with finitely many of the $U_\rho$, 
\be
K \subset U_{\rho_1} \cup \cdots \cup U_{\rho_m},
\ee
and 
\be
b:= \sum_{j=1}^m a_{\rho_j} \in \mathcal{M}
\ee
satisfies $b \geq 1$ on $K$.  Thus $a/b \in \mathcal{M}_{\partial U}$
implies 
\be
a = \left( \frac{a}{b} \right) b \in \mathcal{M}.
\ee
Thus $\mathcal{M}_{\partial U} \subset \mathcal{M}$.  But
$\mathcal{M}_{\partial U}$ is maximal, so either $\mathcal{M} =
\mathcal{M}_{\partial U}$ or $\mathcal{M} = \s^0/\s^{-1}(U)+\cx$.
\end{proof}

The following three lemmas are a semiclassical version of \cite{DS}
with a few modifications to the proofs.  
\begin{lemma}
\label{kappa-construction}
Suppose $g: \Psi^{0}/\Psi^{-\infty}(U) \to \Psi^0 /
\Psi^{-\infty}(V)$ is an order preserving algebra isomorphism.  Then
there exists a diffeomorphism $\kappa : U \to V$.
\end{lemma}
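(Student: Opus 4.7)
The plan is to descend $g$ to the commutative principal-symbol algebra, use the preceding lemma to extract a set-theoretic bijection $\kappa:U\to V$ from the induced bijection of maximal ideals, and then upgrade $\kappa$ to a diffeomorphism via the smoothness of pseudodifferential symbols.

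First, I would note that since $g$ preserves order it restricts to an isomorphism $\Psi^{-1}(U)\to\Psi^{-1}(V)$ and hence descends to an isomorphism of the commutative quotients $\Psi^0/\Psi^{-1}$. Via the principal symbol map this becomes a ring isomorphism
\[
\tilde g:\s^0/\s^{-1}(U)\longrightarrow \s^0/\s^{-1}(V),
\]
and because $g$ is unital (the microlocal identity must go to the microlocal identity), I would extend $\tilde g$ to a unital isomorphism of $\s^0/\s^{-1}(U)+\cx$ onto $\s^0/\s^{-1}(V)+\cx$ by declaring $\tilde g(1)=1$. By construction $\tilde g$ carries the subalgebra of compactly supported symbols $\s^0/\s^{-1}(U)=\mathcal{M}_{\partial U}$ onto $\s^0/\s^{-1}(V)=\mathcal{M}_{\partial V}$.

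Next I would invoke the preceding lemma, which exhausts the maximal ideals in each algebra by the point ideals $\mathcal{M}_\rho$ together with the single boundary ideal. Since $\tilde g$ permutes maximal ideals and carries $\mathcal{M}_{\partial U}$ onto $\mathcal{M}_{\partial V}$, it restricts to a bijection of the sets of point ideals, producing a bijection $\kappa:U\to V$ determined by $\tilde g(\mathcal{M}_\rho)=\mathcal{M}_{\kappa(\rho)}$. By uniqueness of characters with a given kernel in a unital commutative $\cx$-algebra, $\chi_{\kappa(\rho)}\circ\tilde g=\chi_\rho$, so
\[
\tilde g(a)(\kappa(\rho))=a(\rho),\qquad \rho\in U,\ a\in\s^0/\s^{-1}(U)+\cx.
\]

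To upgrade $\kappa$ to a diffeomorphism I would fix $\rho_0\in U$ and a coordinate chart $(y_1,\ldots,y_{2n})$ of $T^*X$ near $\rho_0$, then choose $a_j\in\Ci_c(U)$ agreeing with $y_j$ on a neighbourhood of $\rho_0$. The images $\tilde g(a_j)$ are represented by smooth functions on $V$, and the displayed character identity forces them to equal $y_j\circ\kappa^{-1}$ on a neighbourhood of $\kappa(\rho_0)$; hence the components of $\kappa^{-1}$ in the chosen chart are smooth near $\kappa(\rho_0)$. Applying the same argument to $g^{-1}$ yields smoothness of $\kappa$ itself. The main obstacle I expect is ensuring that the boundary ideals correspond under $\tilde g$; once that is in place, the rest of the argument is the well-known character-theoretic recovery of a smooth map between manifolds from an isomorphism between their algebras of smooth functions.
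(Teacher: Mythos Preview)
Your argument is correct and follows essentially the same route as the paper: descend $g$ to the unitalized principal-symbol algebra, use the preceding maximal-ideal classification to define $\kappa$ via $\tilde g(\mathcal{M}_\rho)=\mathcal{M}_{\kappa(\rho)}$ (after noting $\mathcal{M}_{\partial U}\mapsto\mathcal{M}_{\partial V}$), deduce $\tilde g(a)=a\circ\kappa^{-1}$, and then read off smoothness of $\kappa^{\pm 1}$ from cut-off local coordinates. The only cosmetic difference is that the paper simply defines the unital extension by $\tilde g(C+P)=C+g(P)$ without invoking any ``microlocal identity'' of $g$; your parenthetical remark is unnecessary but harmless.
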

\begin{proof}
We first ``unitalize'' our algebra of pseudodifferential operators by
adding constant multiples of identity.  That is, let 
\be
\tilde{\s}^m(U) = \left\{ a \in \Ci \left( (0,1]_h; \Ci_c(U) + \cx  \right) :
  |\partial^\alpha a| \leq C_\alpha h^{-m} \right\},
\ee
and let $\widetilde{\Psi}^m(U) = \OP \tilde{\s}^m(U)$.  We extend $g$ to an
isomorphism
\be
\tilde{g}: \widetilde{\Psi}^0/\Psi^{-\infty}(U) \to \widetilde{\Psi}^0 /
\Psi^{-\infty}(V)
\ee
by defining for $C \in \cx$ and $P \in \Psi^0(U)$
\be
\tilde{g}(C + P) := C + g(P).
\ee
Observe $\tilde{g}$ induces an algebra isomorphism  
\be
g_0:\tilde{\s}^0/\tilde{\s}^{-1}(U) \to \tilde{\s}^0/\tilde{\s}^{-1}(V) .
\ee
Since $g_0$ takes maximal ideals to maximal ideals, we can define a
map
\be
\kappa: U  \to V .
\ee
First note that since $g_0: \Ci_c(U) \to \Ci_c(V)$,
\be
g_0(\mathcal{M}_{\partial U}) = \mathcal{M}_{\partial
  V}.
\ee 
Then for general $\rho \in U$, define $\kappa: U \to V$ by 
\be
g_0 (\mathcal{M}_{\rho}) = \mathcal{M}_{\kappa(\rho)}.
\ee
By applying $g_0^{-1}$, we immediately see $\kappa$ is bijective. 

Now for $p\in \tilde{\s}^0(U) $ and $\rho \in U$, observe
\be
p - p(\rho) \cdot 1 \in \mathcal{M}_{\rho}
\ee
implies
\be
g(p) - p(\rho) \cdot 1  \in \mathcal{M}_{\kappa(\rho)}.
\ee
Thus
\be
g(p) \left( \kappa(\rho)  \right) = p( \rho) 
\ee
for every $\rho \in U$ implies
\be
g(p) =  p \circ \kappa^{-1}.
\ee

For each $\rho \in U$, let $(x, \xi)$ be local coordinates for $X$ in a
neighbourhood of $\rho$ which does not meet $\partial U$.  Choosing a
suitable cutoff $\chi_\rho$ equal to $1$ near $\rho$, the $\chi_\rho x_j$
and $\chi_\rho \xi_k$ are {\it approximate coordinates} near $\rho$: 
\be
&& \chi_\rho x_j, \chi_\rho \xi_k \in \s^0(U) \text{ for all }
j,k; \\
&& \chi_\rho x_j = x_j, \,\, \chi_\rho \xi_k = \xi_k \text{ near }
\rho.
\ee
Thus
\be
(\chi_\rho x_j) \circ \kappa^{-1} \in \s^0(V) ,
\ee
and similarly for $\chi_\rho \xi_j$ for all $j$.  Composing with
inverse coordinate functions in a neighbourhood of $\kappa(\rho)$
implies $\kappa^{-1}$ is smooth on $U$.  The same argument applied to $g^{-1}$
shows $\kappa$ is smooth on $V$, hence a
diffeomorphism. 
\end{proof}

\begin{lemma}
The diffeomorphism $\kappa$ constructed in Lemma
\ref{kappa-construction} is symplectic.
\end{lemma}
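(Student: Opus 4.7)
The plan is to extract the symplectic property of $\kappa$ from the fact that an algebra isomorphism intertwines commutators, together with the fact that in the semiclassical Weyl calculus commutators of order-zero operators encode Poisson brackets. For $A = \Op_h^w(a)$ and $B = \Op_h^w(b)$ in $\Psi^0(U)$, the composition formula \eqref{a-pound-b} gives
\be
[A,B] = \frac{h}{i}\Op_h^w(\{a,b\}) + \O(h^3),
\ee
so $[A,B]\in\Psi^{-1}(U)$ and $R := h^{-1}[A,B]\in\Psi^0(U)$ has principal symbol $\tfrac{1}{i}\{a,b\}$ in $\s^0/\s^{-1}$.

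Since $g$ is an order-preserving algebra isomorphism, it is $\cx$-linear at each fixed $h$ (hence commutes with multiplication by the scalar $h$) and sends commutators to commutators modulo $\Psi^{-\infty}$. Therefore
\be
g(R) = h^{-1}[g(A), g(B)] \bmod \Psi^{-\infty}.
\ee
By Lemma \ref{kappa-construction}, the map induced by $g$ on $\s^0/\s^{-1}$ is pullback by $\kappa^{-1}$, so the left-hand side has principal symbol $\tfrac{1}{i}\{a,b\}\circ\kappa^{-1}$. Applying the composition formula on the right instead, the principal symbol is $\tfrac{1}{i}\{\sigma_h(g(A)), \sigma_h(g(B))\} = \tfrac{1}{i}\{a\circ\kappa^{-1}, b\circ\kappa^{-1}\}$. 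Equating and letting $a, b$ vary over $\Ci_c(U)$ (viewed as the leading parts of constant-in-$h$ elements of $\s^0$) yields
\be
\{a,b\}\circ\kappa^{-1} = \{a\circ\kappa^{-1}, b\circ\kappa^{-1}\} \quad \text{for all } a, b \in \Ci_c(U).
\ee

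This is precisely the assertion that $\kappa^{-1}$ preserves the Poisson bracket on $T^*X$, which for a diffeomorphism between open subsets of symplectic manifolds is equivalent to the pullback identity $(\kappa^{-1})^*\omega = \omega$; hence $\kappa$ is a symplectomorphism. The argument reduces to a single commutator calculation, so the only real subtlety lies in the bookkeeping at the filtration level: one must know that the action of $g$ on $\Psi^{-1}/\Psi^{-2}$ (reached from $\Psi^0/\Psi^{-1}$ by multiplication by $h$) is compatible with the symbol-level map $g_0$ of Lemma \ref{kappa-construction}. This compatibility reduces to $g$ commuting with multiplication by the scalar $h$, which follows since an algebra isomorphism is automatically $\cx$-linear once one checks (using surjectivity and multiplicativity) that $g$ sends the identity to the identity.
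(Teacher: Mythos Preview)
Your proof is correct and follows essentially the same route as the paper: both use that an algebra isomorphism sends commutators to commutators, pass via the relation $\sigma_h(ih^{-1}[A,B])=\{a,b\}$ to the identity $\{a,b\}\circ\kappa^{-1}=\{a\circ\kappa^{-1},b\circ\kappa^{-1}\}$, and conclude that $\kappa^{-1}$ (hence $\kappa$) is symplectic. Your explicit remark about $g$ commuting with multiplication by $h$ is a point the paper leaves implicit; otherwise the arguments are the same.
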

\begin{proof}

Observe $\Psi^0/ \Psi^{-\infty}(U)$ is a Lie algebra with brackets $ih^{-1} [\cdot,
  \cdot]$, and $g$ induces a Lie algebra isomorphism with $\Psi^0/\Psi^{-\infty}(V)$.
$\s^0(U)$ is a Lie algebra with brackets $ \{
\cdot, \cdot \}$, hence $g_0$ is a Lie algebra isomorphism $\s^0/\s^{-1}(U)
\to \s^0/ \s^{-1}(V)$.  Let $a, b \in \s^0(U)$ and calculate
\be
 g_0( \{ a,  b\}) = 
 \left\{ g_0( a), g(  b) \right\} ,
\ee
or
\be
 (\{a,b\}) \circ \kappa^{-1} =  \{ a \circ \kappa^{-1}, b \circ \kappa^{-1} \}.
\ee
Letting $a$ and $b$ run through local approximate coordinates implies $\kappa^{-1}$ is symplectic.
\end{proof}

Now fix $\widetilde{U} \Subset U$, and let 
\be
F: L^2(X) \to L^2(Y) 
\ee
be the $h$-Fourier integral operator associated to
$\kappa|_{\tU}$ as in Proposition \ref{global-egorov}.  We define an automorphism of
$\Psi^0/\Psi^{-\infty}(\widetilde{U})$, $g_1$, by
\ben
\label{g-1-def}
g_1 (P) = F^{-1}g(P)F.
\een
Observe $g_1$ is both order-preserving and preserves principal
symbol.

\begin{lemma}
Suppose 
\be
g_1 : \Psi^0/\Psi^{-\infty}(\widetilde{U}) \to
\Psi^0/\Psi^{-\infty}(\widetilde{U})
\ee
is an order-preserving automorphism which preserves principal symbol.
Then there exists $B \in
\Psi^0(\widetilde U)$, elliptic on $\widetilde{U}$ such that
\ben
\label{g-1-def-2}
g_1(P) = BPB^{-1} \bmod \O(h^\infty)
\een
for every $P \in \Psi^0/\Psi^{-\infty}(U)$.
\end{lemma}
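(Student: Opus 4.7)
The plan is to construct $B$ as a Borel-summed asymptotic product $B \sim B_1 B_2 B_3 \cdots$, where $B_k$ is chosen so that conjugation by $B_k$ eliminates the $h^k$-order discrepancy remaining after the first $k-1$ corrections. Since $g_1$ preserves principal symbols, $g_1(P) - P \in \Psi^{-1}(\tU)$, and the iteration will drive this error to $\Psi^{-\infty}$.

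To begin, I would write $g_1(P) = P + h R_1(P) \bmod \Psi^{-2}$ and examine $r_1 := \sigma_h(R_1) : \Ci(\tU) \to \Ci(\tU)$. Expanding $g_1(PQ) = g_1(P) g_1(Q)$ at order $h$ forces $r_1(pq) = r_1(p) q + p r_1(q)$, so $r_1$ is a derivation of $\Ci(\tU)$, i.e., a smooth vector field. Applying $g_1$ to $(ih)^{-1}[P,Q]$ and matching at order $h$ yields the additional identity $r_1(\{p,q\}) = \{r_1(p), q\} + \{p, r_1(q)\}$, so $r_1$ is a \emph{symplectic} vector field. Shrinking $\tU$ if necessary so that $H^1(\tU, \cx) = \{0\}$, I obtain $q_1 \in \Ci(\tU)$ with $r_1(p) = \{q_1, p\}$.

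With $Q_1 := \Op_h^w(q_1)$, I would set $B_1 := \exp(i Q_1) \in \Psi^0(\tU)$ (adjusting the sign of $q_1$ to match the commutator convention of the Weyl calculus); its principal symbol $e^{i q_1}$ is nowhere vanishing, hence $B_1$ is elliptic. A standard BCH expansion based on $[A,B]/(ih) = \{a,b\}^w + \O(h^2)$ yields $B_1^{-1} P B_1 = P - h R_1(P) + \O(h^2)$, so $g_2 := B_1^{-1} g_1(\cdot) B_1$ is still an order-preserving algebra homomorphism and satisfies $g_2(P) - P \in \Psi^{-2}$. Inductively, at step $k \geq 2$, writing $g_k(P) - P = h^k R_k(P) \bmod \Psi^{-k-1}$, the same derivation/Poisson-preservation argument gives $q_k \in \Ci(\tU)$ with $r_k(p) = \{q_k, p\}$, and I would take $B_k := I + i h^{k-1} \Op_h^w(q_k)$; conjugation by $B_k$ then shifts $P$ by $-h^k R_k(P)$ to leading order, cancelling the error. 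Since $B_k - I \in \Psi^{-(k-1)}$ for $k \geq 2$, the asymptotic product $B \sim B_1 B_2 B_3 \cdots$ Borel-sums to an element $B \in \Psi^0(\tU)$ with nonvanishing principal symbol $e^{iq_1}$, hence elliptic, and by construction $B^{-1} g_1(P) B - P \in \Psi^{-N}$ for every $N$, which gives the desired identity.

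The main obstacle I anticipate is the Hamiltonian extraction step at each iteration: the derivation property of $r_k$ follows readily from multiplicativity, but the Poisson-preservation identity must be carefully re-derived at each stage from the fact that $g_k$ remains an algebra homomorphism, and the passage from symplectic to Hamiltonian rests on the topological hypothesis $H^1(\tU, \cx) = \{0\}$, which can always be arranged by shrinking $\tU$. The BCH-style computation of $B_k^{-1} P B_k$ and the Borel summation of the formal product $\prod_k B_k$ are then routine consequences of the semiclassical Weyl calculus.
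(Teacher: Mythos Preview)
Your proof is correct and follows essentially the same inductive scheme as the paper: at each stage the leading defect is a derivation (from multiplicativity) and a symplectic vector field (from commutator preservation), hence Hamiltonian, and conjugation by an elliptic correction kills that order. The only cosmetic differences are that the paper quantizes the symbol $b=e^{-i\chi f}$ directly via $B=\Op_h^w(b)$ rather than taking the operator exponential $\exp(iQ_1)$, and handles the $H^1$-obstruction by working in local coordinates with a cutoff instead of assuming $H^1(\tU,\cx)=\{0\}$ globally.
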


\begin{proof}
The proof will be by induction.  We drop the dependence on $\widetilde{U}$ since
the lemma is concerned with automorphisms.  Suppose for $l \geq 1$ we have for
every $m$ and every $P \in \Psi^m$
\be
g_1(P) - P \in \Psi^{m-l}.
\ee
This induces a map 
\be
\beta: \s^m/ \s^{m-1}  \to \s^{m-l}/\s^{m-l-1} ,
\ee
which, using the Weyl composition formula \eqref{Weyl-comp}, satisfies
\be
&& \text{ (i) } \beta(pq) = \beta(p) q + p \beta(q); \\
&& \text{(ii) } \beta(\{p,q\}) = \{ \beta(p), q\} + \{ p, \beta(q)\}.
\ee
Consider the action of $\beta$ on $\s^0$, and observe from
property (i) above, for $p,q \in \s^0$,
\be
\beta(pq) = \beta(p) q + p \beta(q) \in \s^{-l},
\ee
so $\beta$ is $h^{l}$ times a derivation on $\s^0$.


For any $\rho \in U$, we choose coordinates $(x, \xi)$ near $\rho$,
and a cutoff $\chi_\rho$ which is equal to $1$ near $\rho$ and
compactly supported in $U$.  Then $\chi_\rho x_j$ and $\chi_\rho
\xi_j$ become approximate coordinates which are equal to $x_j$ and
$\xi_j$ near $\rho$ but are in $\s^0$.  Near $\rho$, $\beta$ takes the
form
\be
\beta =h^l \sum_j \left( \gamma_j(x, \xi) \partial_{x_j} + \delta_j(x,
\xi) \partial_{\xi_j} \right),
\ee
where $\gamma_j = \beta( \chi_\rho x_j)$ and $\delta_j =
\beta(\chi_\rho \xi_j)$.  Using property (ii) above, we have near $\rho$
\be
\beta(\{\chi_\rho x_j, \chi_\rho \xi_k \}) = \beta(\{\chi_\rho x_j
, \chi_\rho x_k\}) = \beta( \{ \chi_\rho \xi_j, \chi_\rho \xi_k \} ) =
0
\ee
which implies
\be
\frac{\partial \gamma_j}{\partial x_k} = - \frac{\partial
  \delta_k}{\partial \xi_j}, \,\, \frac{\partial \gamma_j}{\partial
  x_k} = \frac{\partial \gamma_k}{\partial x_j}, \text{ and }
\frac{\partial \delta_j}{\partial \xi_k} = \frac{\partial
  \delta_k}{\partial \xi_j}.
\ee
Thus there exists a locally defined smooth function $f$ such that 
\be
\gamma_j = \frac{\partial f}{\partial \xi_j} \text{ and } \delta_k = -
\frac{\partial f}{\partial x_k},
\ee
and locally
\be
h^{-l}\beta = H_f.
\ee
Define a smooth function $b$ by
\be
b = \exp(-i\chi f),
\ee
for a cutoff $\chi$ which is identically $1$ on $\widetilde{U}$ with support in
$U$, so that $df = i db/b$ on $\widetilde{U}$, and locally
\be
\beta = h^l H_{i \log b}.
\ee
Let $B = \Op_h^w(b)$ and observe the principal symbol of 
\be
B^{-1}P B - P = B^{-1}[P,B]
\ee
in the $\s^0 (\widetilde U)$ calculus is
\be
\frac{h}{i} b^{-1} \{p,b\} = h H_{i\log b} (p).
\ee
For the base case of our induction, if $P \in \s^m(
\widetilde U)$, then 
\be
g_1(P) - B^{-1}PB \in \s^{m-2}( \widetilde U),
\ee
so that
\be
B  g_1 (P) B^{-1} - P \in \s^{m-2}( \widetilde U).
\ee
Replace $g_1(P)$ with $B g_1 B^{-1}$.  

Now for the purposes of induction, assume 
\be
g_1 (P) - P \in \s^{m-l}( \widetilde U),
\ee
and apply the above argument to get $B_l \in \s^{-l}$ so that
\be
g_1 (P) - B_l^{-1} P B_l \in \s^{m-l-1}( \widetilde U).
\ee
Then replacing $ g_1 (P)$ with $B_l g_1 (P) B_l^{-1}$
finishes the induction.  Thus there exists $B \in \s^0(\widetilde U)$ so
that
\be
B g_1 (P) B^{-1} = P \bmod \O(h^\infty).
\ee
\end{proof}

Theorem \ref{main-theorem} now follows immediately from applying
Proposition \ref{AF=FB} to \eqref{g-1-def} and \eqref{g-1-def-2}.

\qed

\end{document}